\newtheorem{Theorem}{Theorem}[section]
\newtheorem{Lemma}[Theorem]{Lemma}
\numberwithin{equation}{section}
\def\Par{\pi}
\def\j{\mbox{\boldmath $j$}}
\def\s{\mbox{\boldmath $s$}}
\def\vec0{\mbox{\boldmath $0$}}
\def\A{\mbox{\boldmath $A$}}
\def\B{\mbox{\boldmath $B$}}
\def\C{\mbox{\boldmath $C$}}
\def\I{\mbox{\boldmath $I$}}
\def\S{\mbox{\boldmath $S$}}
\begin{document}

\title{A note on the order of iterated line digraphs}

\author{C. Dalf\'o$^a$, M.A. Fiol$^b$\\
$^{a}${\small Departament de Matem\`atiques} \\
{\small Universitat Polit\`ecnica de Catalunya} \\
{\small Barcelona, Catalonia} \\
{\small {\tt{cristina.dalfo@upc.edu}}} \\
$^{b}${\small Departament de Matem\`atiques} \\
{\small Universitat Polit\`ecnica de Catalunya} \\
{\small Barcelona Graduate School of Mathematics} \\
{\small Barcelona, Catalonia} \\
{\small{\tt{fiol@ma4.upc.edu}}}}

\date{}

\maketitle

\begin{abstract}
Given a digraph $G$, we propose a new method to find the recurrence equation for
the number of vertices $n_k$ of the $k$-iterated line digraph
$L^k(G)$, for $k\geq0$, where $L^0(G)=G$. We obtain this result by using the
minimal polynomial of a quotient digraph $\pi(G)$ of $G$. We show some examples of this method applied to the so-called cyclic Kautz, the unicyclic, and the acyclic digraphs. In the first case, our method gives the enumeration of the ternary length-2 squarefree words of any length.
\end{abstract}

\noindent{\em Mathematics Subject Classifications:} 05C20, 05C50.

\noindent{\em Keywords:} Line digraph, adjacency matrix,
minimal polynomial, regular parti\-tion, quotient
digraph, recurrence


\section{Preliminaries}

In this section we recall some basic notation and results
concerning digraphs and their spectra.
A digraph $G=(V,E)$ consists of a (finite) set
$V=V(G)$ of vertices and a set $E=E(G)$ of arcs (directed edges) between vertices
of $G$. As the initial and final vertices of an arc are not necessarily different, the
digraphs may have \emph{loops} (arcs from a vertex to itself),
and \emph{multiple arcs}, that is, there can be more than one arc from each vertex
to any other. If $a=(u,v)$ is an arc from $u$ to  $v$, then vertex $u$ (and arc $a$)
is {\em adjacent to} vertex $v$, and vertex $v$ (and arc $a$) is {\em adjacent from}
$u$. Let $G^+(v)$ and $G^-(v)$ denote the set of arcs adjacent from and to vertex
$v$, respectively.  A digraph $G$ is $d$\emph{-regular} if $|G^+(v)|=|G^-(v)|=d$ for all $v\in V$.

In the line digraph $L(G)$ of a digraph $G$, each vertex of $L(G)$ represents an
arc of $G$, that is, $V(L(G))=\{uv|(u,v)\in E(G)\}$; and vertices $uv$ and $wz$ of $L(G)$ are adjacent if and only if $v=w$, namely, when arc $(u,v)$ is adjacent to arc $(w,z)$ in $G$. For $k\geq0$, we consider the sequence of line digraph iterations
$L^0(G)=G,L(G),L^2(G),\ldots,L^k(G)=L(L^{k-1}(G)),\ldots$
It can be easily seen that every vertex of $L^k(G)$ corresponds to a walk
$v_0,v_1,\ldots,v_k$ of length $k$ in $G$, where $(v_{i-1,},v_{i})\in E$ for $i=1,\ldots,k$.
Then, if there is one arc between pairs of vertices and $\A$ is the adjacency matrix of $G$, the $uv$-entry of the power $\A^k$, denoted by $a_{uv}^{(k)}$, is the number of $k$-walks from vertex $u$ to vertex $v$, and the order $n_k$ of $L^k(G)$ turns out to be
\begin{equation}\label{orderL^kG}
n_k=\j\A^k\j^{\top},
\end{equation}
where $\j$ stands for the all-$1$ vector. If there are multiple arcs between pairs of vertices, then the corresponding entry in the matrix is not 1, but the number of these arcs.
If $G$ is a $d$-regular digraph with $n$ vertices then its line
digraph $L^k(G)$ is
$d$-regular with $n_k=d^kn$ vertices.

Recall also that a digraph $G$ is \emph{strongly connected} if there is a
(directed) walk between every
pair of its vertices. 
If $G$ is strongly connected, different from a directed cycle, and it has
diameter $D$, then its line digraph $L^k(G)$ has diameter $D+k$. See Fiol, Yebra, and Alegre~\cite{fya84} for more details.
The interest of the line digraph technique is that it allows us to obtain digraphs with small diameter and large connectivity.
For a comparison between the line digraph technique and other techniques to obtain digraphs with minimum diameter see Miller, Slamin, Ryan and Baskoro~\cite{MiSlRyBa13}. Since these techniques are related to the degree/diameter problem, we refer also to the comprehensive survey on this problem by Miller and \v{S}ir\'{a}\v{n}~\cite{ms}.

For the concepts and/or results not presented here, we refer the reader to some of
the basic textbooks and papers on the subject; about digraphs see, for instance,
Chartrand and Lesniak~\cite{cl96} or Diestel~\cite{d10}, and Godsil~\cite{g93} about the quotient graphs.


This paper is organized as follows. In Section~\ref{sec:reg-part}, we recall the
definition of regular partitions and we give some lemmas about them.
In Section~\ref{sec:main-result} we prove our main result.
In Section~\ref{sec:examples}, we give examples in which the sequence on the number
of vertices of iterated line digraphs is increasing, tending to a positive
constant, or tending to zero.

\section{Regular partitions}
\label{sec:reg-part}

Let $G$ be a digraph with adjacency matrix $\A$. A partition $\Par=(V_1,\ldots,
V_m)$ of its vertex set $V$ is called {\em regular} (or {\em equitable})
whenever, for any $i,j=1,\ldots,m$, the {\em intersection numbers}
$b_{ij}(u)=|G^+(u)\cap V_j|$, where $u\in V_i$, do not depend on the vertex $u$ but only on the subsets (usually called {\em classes} or {\em
cells}) $V_i$ and $V_j$. In this case, such numbers are simply written as $b_{ij}$,
and the $m\times m$ matrix $\B=(b_{ij})$ is referred to as the {\em quotient matrix} of $\A$ with respect to $\Par$. This is also represented by the {\em quotient (weighted) digraph} $\pi(G)$ (associated to the partition $\pi$),
with vertices representing the cells, and an arc with weight $b_{ij}$ from  vertex
$V_i$ to vertex $V_j$ if and only if $b_{ij}\neq 0$. Of course, if $b_{ii}>0$ for some $i=1,\ldots,m$, the quotient digraph $\pi(G)$ has loops.

The {\em characteristic matrix} of (any) partition $\Par$ is the $n\times m$ matrix
$\S=(s_{ui})$ whose $i$-th column is the characteristic vector of $V_i$, that is, $s_{ui}=1$ if $u\in V_i$, and $s_{ui}=0$ otherwise. In terms of such a matrix,
we have the following characterization of regular partitions.

\begin{Lemma}
Let $G=(V,E)$ be a digraph with adjacency matrix $\A$, and vertex partition $\Par$
with characteristic matrix $\S$. Then $\Par$ is regular if and only if there
exists an $m\times m$ matrix $\C$ such that $\S\C=\A\S$. Moreover, $\C=\B$, the
quotient matrix of $\A$ with respect to $\Par$.
\end{Lemma}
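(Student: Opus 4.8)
The plan is to prove both implications by directly computing the $(u,j)$-entries of the two matrix products $\A\S$ and $\S\C$ and comparing them. First I would fix a vertex $u\in V$ and a cell index $j\in\{1,\dots,m\}$ and compute
\[
(\A\S)_{uj}=\sum_{v\in V}a_{uv}s_{vj}=\sum_{v\in V_j}a_{uv},
\]
which counts the arcs from $u$ into the cell $V_j$; if $u\in V_i$ this is exactly the intersection number $b_{ij}(u)=|G^+(u)\cap V_j|$. The key structural fact I would use for the other product is that, because $\Par$ is a partition, each row of $\S$ contains a single $1$, in the column corresponding to the unique cell containing $u$. Hence, writing $i=i(u)$ for that index,
\[
(\S\C)_{uj}=\sum_{\ell=1}^{m}s_{u\ell}c_{\ell j}=c_{ij}.
\]
Thus the matrix identity $\S\C=\A\S$ is equivalent to the scalar condition that $c_{ij}=b_{ij}(u)$ holds for every $j$ and every $u\in V_i$.

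Granting this equivalence, both directions follow immediately. For the forward implication, if $\Par$ is regular then by definition $b_{ij}(u)$ does not depend on the choice of $u\in V_i$, so setting $\C=\B=(b_{ij})$ gives $(\S\B)_{uj}=b_{ij}=b_{ij}(u)=(\A\S)_{uj}$, that is, $\S\B=\A\S$; this simultaneously furnishes the required matrix $\C$ and shows that it can be taken to be $\B$. For the converse, suppose some $\C$ satisfies $\S\C=\A\S$. Then for each fixed $i,j$ the entry $c_{ij}$ equals $b_{ij}(u)$ for every $u\in V_i$; since $c_{ij}$ is a single number independent of $u$, the intersection numbers $b_{ij}(u)$ are constant on each cell, which is precisely the definition of a regular partition. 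Comparing with the definition of $\B$ then forces $\C=\B$, giving the final ``moreover'' claim.

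I do not expect a genuine obstacle here, as the argument is essentially an entrywise unwinding of the two products. The one point that deserves care is the computation of $(\S\C)_{uj}$: it relies on $\Par$ being a bona fide partition, with every vertex in exactly one cell, so that each row of $\S$ is a standard basis vector and the product $\S\C$ simply copies the rows of $\C$ according to cell membership. If the cells were allowed to overlap this step would fail, so I would be explicit about using the partition hypothesis there. In the presence of multiple arcs, $a_{uv}$ denotes the arc multiplicity and the same computation goes through verbatim, with $b_{ij}(u)$ read as the number of arcs from $u$ into $V_j$.
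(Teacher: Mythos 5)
Your proposal is correct and follows essentially the same route as the paper: both compute the entries $(\S\C)_{uj}=c_{ij}$ and $(\A\S)_{uj}=b_{ij}(u)$ for $u\in V_i$ and conclude that $\S\C=\A\S$ is equivalent to $c_{ij}=b_{ij}(u)$ for all $u$, which yields both directions and $\C=\B$. Your write-up merely makes explicit the two implications that the paper compresses into ``and the result follows.''
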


\begin{proof}
Let $\C=(c_{ij})$ be an $m\times m$ matrix. For any fixed  $u\in V_i$ and
$j=1,\ldots,m$, we have
$$
(\S\C)_{uj} = \sum_{k=1}^m s_{uk}c_{kj}=c_{ij}, \quad
(\A\S)_{uj} =  \sum_{v\in V} a_{uv} s_{vj}=|G^+(u)\cap V_j|= b_{ij}(u),
$$
and the result follows.
\end{proof}


Most of the results about regular partitions in graphs can be generalized for
regular partitions in digraphs. For instance, using the above lemma it can be proved that all the eigenvalues of the quotient matrix $\B$ are also eigenvalues of $\A$. Moreover, we have the following result.

\begin{Lemma}
Let $G$ be a digraph with adjacency matrix $\A$. Let $\pi=(V_1,\ldots,$ $V_m)$ be a
regular partition of $G$, with quotient matrix $\B$. Then, the number of $k$-walks from each vertex $u\in V_i$ to all vertices of $V_j$ is the $ij$-entry of $\B^{k}$.
\end{Lemma}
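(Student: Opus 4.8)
The plan is to translate the combinatorial count into a matrix identity and then propagate the relation $\A\S=\S\B$ (supplied by the previous lemma) through powers of $\A$. First I would observe that the number of $k$-walks starting at a fixed vertex $u$ and ending anywhere in the cell $V_j$ is exactly $\sum_{v\in V_j}(\A^k)_{uv}$, since $(\A^k)_{uv}$ counts the $k$-walks from $u$ to $v$. Because the $j$-th column of the characteristic matrix $\S$ is the indicator vector of $V_j$, this sum is precisely the $(u,j)$-entry of the product $\A^k\S$; that is, the quantity I want equals $(\A^k\S)_{uj}$.

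The heart of the argument is then the claim that $\A^k\S=\S\B^k$ for all $k\geq0$, which I would establish by induction on $k$. The base case $k=0$ is immediate since $\A^0\S=\S=\S\B^0$ (alternatively, $k=1$ is exactly the identity $\A\S=\S\B$ from the previous lemma). For the inductive step, assuming $\A^k\S=\S\B^k$, I would write $\A^{k+1}\S=\A(\A^k\S)=\A(\S\B^k)=(\A\S)\B^k=(\S\B)\B^k=\S\B^{k+1}$, where the crucial move is pushing $\A$ across $\S$ using $\A\S=\S\B$ and then invoking associativity. This yields the desired identity for every $k$.

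Finally I would read off the $(u,j)$-entry on the right-hand side. For $u\in V_i$ the $u$-th row of $\S$ has its single nonzero entry (equal to $1$) in column $i$, so $(\S\B^k)_{uj}=\sum_{l=1}^m s_{ul}(\B^k)_{lj}=(\B^k)_{ij}$. Combining this with the reduction of the first paragraph shows that the number of $k$-walks from $u$ to $V_j$ equals $(\B^k)_{ij}$, as claimed; in particular this count depends on $u$ only through its cell $V_i$, consistent with the regularity of $\pi$. I do not expect a genuine obstacle here: the entire proof is driven by the single relation $\A\S=\S\B$, and the only point requiring a little care is the bookkeeping in the inductive step, namely making sure that $\A$ is moved past $\S$ \emph{before} any power of $\B$ is touched, so that each application uses the degree-one identity rather than an unproven higher-power analogue.
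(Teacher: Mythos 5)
Your proof is correct, but it takes a genuinely different route from the paper's. The paper argues combinatorially, by induction on the walk length: a $(k+1)$-walk from $u\in V_i$ to $V_j$ decomposes as a $k$-walk from $u$ ending at some vertex of a cell $V_h$, followed by one final arc into $V_j$; regularity guarantees that every such $k$-walk extends in exactly $b_{hj}$ ways regardless of where it ends, giving the count $\sum_{h=1}^m(\B^{k})_{ih}b_{hj}=(\B^{k+1})_{ij}$. You instead work at the matrix level: you identify the walk count with $(\A^k\S)_{uj}$, lift the intertwining relation $\A\S=\S\B$ of the previous lemma to $\A^k\S=\S\B^k$ by induction, and then read off the $(u,j)$-entry of $\S\B^k$. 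Your approach buys a cleaner mechanism --- the only combinatorial input is the standard fact that $(\A^k)_{uv}$ counts $k$-walks, and all the regularity is packaged once and for all in $\A\S=\S\B$; it also makes transparent why the count is independent of the choice of $u\in V_i$, and the identity $\A^k\S=\S\B^k$ immediately yields the paper's formula $n_k=\s\B^{k}\j^{\top}$ as a corollary. The paper's argument buys self-containedness: it uses only the definition of the intersection numbers $b_{ij}$, never invoking the characteristic matrix, and it exhibits directly the combinatorial role that regularity plays in extending walks. Both inductions are sound; yours shifts the inductive burden from counting walks to a purely algebraic identity, which is where your care about applying the degree-one relation before touching powers of $\B$ is exactly the right bookkeeping.
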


\begin{proof}
We use induction. The result is clearly true for $k=0$, since $\B^0=\I$, and for
$k=1$ because of the definition of $\B$. Suppose that the result holds for some $k>1$. Then the set of walks of length $k+1$ from $u\in V_i$ to the vertices of $V_j$ is in bijective correspondence with the set of $k$-walks  from $u$ to vertices $v\in V_h$ adjacent to some vertex of $V_j$. Then, the number of such
walks is $\sum_{h=1}^m(\B^{k})_{ih}b_{hj}=(\B^{k+1})_{ij}$, as claimed.
\end{proof}

As a consequence of this lemma, the number of vertices of $L^{k}(G)$ is
\begin{equation}
\label{n_l}
n_{k}=\sum_{i=1}^m |V_i|\sum_{j=1}^m (\B^{k})_{ij}=\s\B^{k}\j^{\top},
\end{equation}
where $\s=(|V_1|,\ldots,|V_m|)$  and $\j=(1,\ldots,1)$.

\section{Main result}
\label{sec:main-result}

In the following result, we obtain a recurrence equation on the number of
vertices $n_k$ of the
$k$-iterated line digraph of a digraph $G$.

\begin{Theorem}
\label{maintheo}
Let $G=(V,E)$ be a digraph on $n$ vertices, and consider a regular partition
$\pi=(V_1,\ldots,V_m)$ with quotient matrix $\B$. Let $m(x)=x^r-\alpha_{r-1} x^{r-1}-\cdots-\alpha_0$ be the minimal polynomial of $\B$. Then, the number of vertices $n_k$ of the $k$-iterated line digraph $L^k(G)$ satisfies the
recurrence
\begin{equation}
\label{recur}
n_k= \alpha_{r-1} n_{k-1}+\cdots+\alpha_{0} n_{k-r},\qquad k=r,r+1,\ldots
\end{equation}
initialized with the values $n_{k}$, for $k=0,1,\ldots,r-1$, given by \eqref{n_l}.
\end{Theorem}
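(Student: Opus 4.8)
The plan is to lift the scalar recurrence from a matrix identity satisfied by $\B$, using the formula $n_k = \s\B^k\j^{\top}$ from \eqref{n_l} to translate between matrix powers and the orders $n_k$.

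First I would invoke the defining property of the minimal polynomial: since $m(x)=x^r-\alpha_{r-1}x^{r-1}-\cdots-\alpha_0$ annihilates $\B$, we have
\[
\B^r = \alpha_{r-1}\B^{r-1} + \cdots + \alpha_0\I.
\]
Next, for any fixed $k\geq r$, I would multiply this relation on the right by $\B^{k-r}$ (legitimate precisely because $k-r\geq 0$), obtaining the matrix recurrence
\[
\B^k = \alpha_{r-1}\B^{k-1} + \cdots + \alpha_0\B^{k-r}.
\]

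The final step is to pass from matrices to scalars. Left-multiplying by the row vector $\s=(|V_1|,\ldots,|V_m|)$ and right-multiplying by $\j^{\top}$, and using \eqref{n_l} to identify $\s\B^{j}\j^{\top}=n_j$ for each exponent $j$, the matrix identity collapses term by term to
\[
n_k = \alpha_{r-1}n_{k-1} + \cdots + \alpha_0 n_{k-r},
\]
which is exactly \eqref{recur}. The initial values $n_0,\ldots,n_{r-1}$ need no recurrence and are read off directly from \eqref{n_l}.

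I do not expect a serious obstacle: the argument is essentially a sandwiching of the Cayley--Hamilton-type identity for the minimal polynomial between $\s$ and $\j^{\top}$. The only points demanding care are keeping the exponent $k-r$ nonnegative (hence the range $k\geq r$) and observing that using the minimal rather than the characteristic polynomial yields the \emph{shortest} such recurrence, of order $r=\deg m$ rather than $m$.
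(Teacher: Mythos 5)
Your proposal is correct and follows essentially the same route as the paper: both multiply the annihilating identity $m(\B)=\vec0$ by $\B^{k-r}$ (the paper phrases this as ``$x^{k-r}m(x)$ annihilates $\B$'') and then sandwich the resulting matrix recurrence between $\s$ and $\j^{\top}$ via \eqref{n_l}. Your explicit remark that $k\geq r$ is needed for the exponent $k-r$ to be nonnegative is in fact slightly more careful than the paper, which loosely writes ``for any $k\ge 0$''.
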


\begin{proof}
Since the  polynomial $x^{k-r}m(x)$ annihilates $\B$ for any $k\ge 0$,
we have
$$
\B^k= \alpha_{r-1} \B^{k-1}+\cdots+\alpha_{0} \B^{k-r}.
$$
Then, by \eqref{n_l}, we get the recurrence
\begin{align*}
n_k=\s\B^k\j^{\top} &= \alpha_{r-1} \s\B^{k-1}\j^{\top}+\cdots+\alpha_{0}
\s\B^{k-r}\j^{\top}\\
 &=\alpha_{r-1} n_{k-1}+\cdots+\alpha_{0} n_{k-r},
\end{align*}
with the first values $n_{k}$, for $k=0,\ldots, r-1$, given as claimed.
\end{proof}

\section{Examples}
\label{sec:examples}

In what follows, we give examples of the three possible behaviours of the sequence
$n_0, n_1, n_2,\ldots$
Namely, when it is increasing, tending to a positive constant, or tending to zero.

\subsection{Cyclic Kautz digraphs}
\label{sec:ex-CK}

\begin{figure}[t]
    \vskip-.5cm
    \begin{center}
        \includegraphics[width=12cm]{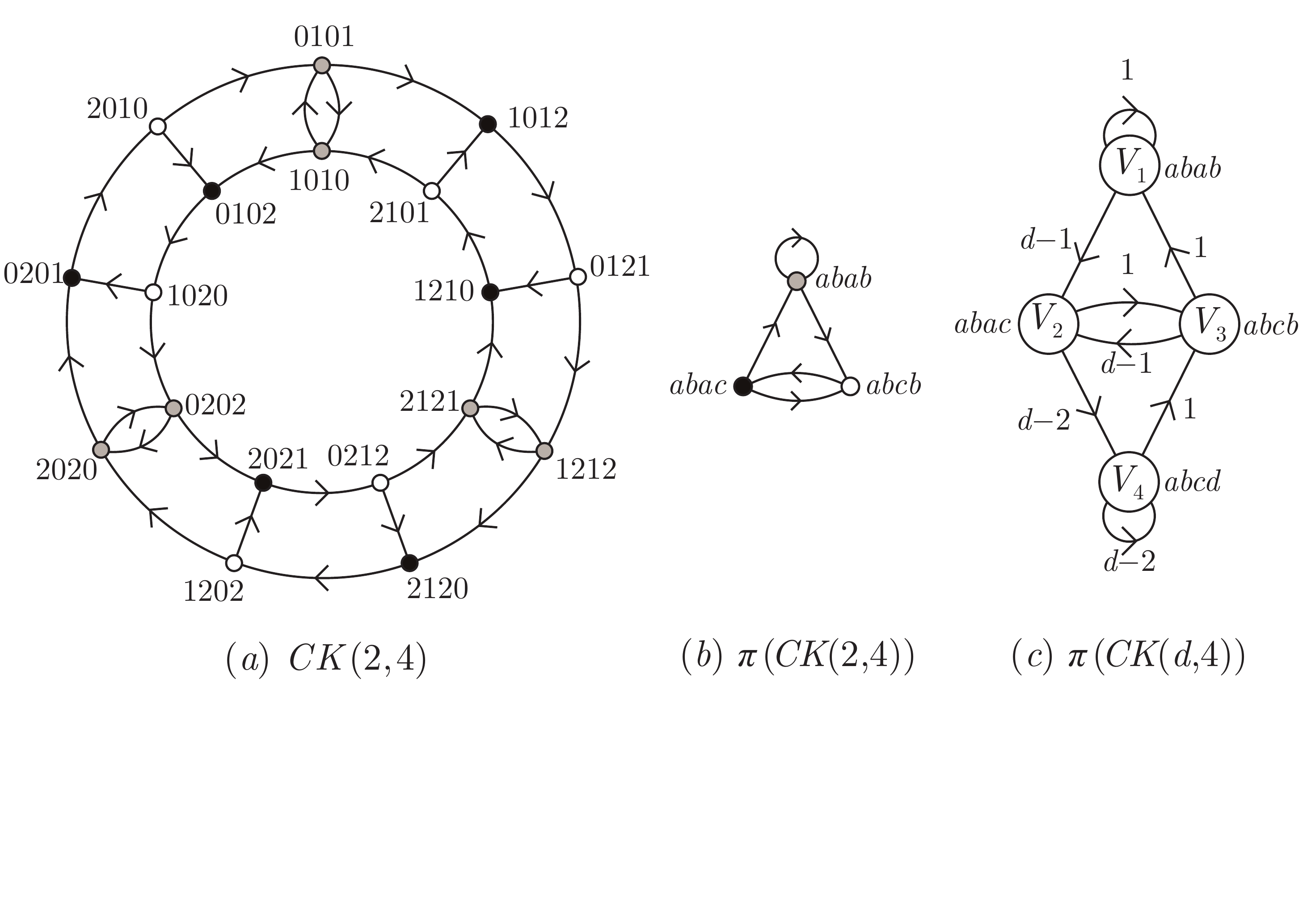}
    \end{center}
    \vskip-2.75cm
	\caption{The cyclic Kautz digraph $CK(2,4)$, its quotient $\pi(CK(2,4))$, and
the quotient digraph of $CK(d,4)$.}
	\label{fig:quocient-donut-color}
\end{figure}


The {\em cyclic Kautz digraph} $CK(d,\ell)$, introduced by B\"{o}hmov\'{a},
Dalf\'{o}, and Huemer in~\cite{BoDaHu14}, has vertices labeled by all possible sequences $a_1\ldots a_\ell$  with $a_i\in\{0,1,\ldots,d\}$, $a_i\neq a_{i+1}$ for $i=1,\ldots,\ell-1$, and $a_1\neq a_\ell$. Moreover,
there is an arc from vertex $a_1 a_2\ldots a_\ell$ to vertex $a_2 \ldots a_\ell
a_{\ell+1}$, whenever $a_1\neq a_\ell$ and $a_2\neq a_{\ell+1}$. By this definition, we observe that the cyclic Kautz digraph $CK(d,\ell)$ is a subdigraph of the well-known Kautz digraph $K(d,\ell)$, defined in the same way, but without the requirement $a_1\neq a_\ell$.

For example, Figure~\ref{fig:quocient-donut-color}$(a)$ shows the cyclic Kautz
digraph $CK(2,4)$. Notice that, in general, such digraphs are not $d$-regular and, hence, the number of vertices of their iterated line digraphs are not obtained by repeatedly multiplying by $d$. Instead, we can apply our method, as shown next with $CK(2,4)$. This digraph has a regular partition $\pi$ of its vertex set into three classes (each one with 6 vertices): $abcb$ (the second and the last digits are equal), $abab$ (the first and the third digits are equal, and also the second and the last), and $abac$ (the first and the third digits are equal). Then, the quotient matrix of $\pi$ (which in this case coincides with the adjacency matrix of $\pi(CK(2,4))$) is
$$
\B=\left(
  \begin{array}{ccc}
    0 & 1 & 1 \\
    0 & 1 & 1 \\
    1 & 0 & 0 \\
  \end{array}
\right),
$$
and it has minimal polynomial $m(x)=x^3-x^2-x$. Consequently, by
Theorem~\ref{maintheo}, the number of vertices of $L^k(CK(2,4))$ satisfies the recurrence $n_k=n_{k-1}+n_{k-2}$ for $k\ge 3$. In fact, in this
case, $\s(\B^2-\B-\I)\j^{\top}=0$, and the above recurrence applies from $k=2$.
This, together with the initial values $n_0=18$ and $n_1=\s\B\j^{\top}=30$, yields the Fibonacci sequence, $n_2=48, n_3=78, n_4=126\ldots$,
as B\"{o}hmov\'{a}, Dalf\'{o}, and Huemer~\cite{BoDaHu14} proved by using a
combinatorial approach.
Moreover, $n_k$ is also the number of ternary length-2 squarefree words of length
$k+4$ (that is, words on a three-letter alphabet that do not contain an adjacent
repetition of any subword of length $\le 2$); see the sequence A022089 in the On-Line Encyclopedia of Integer Sequences~\cite{Sl}.

In fact our method allows us to generalize this result and, for instance, derive a
formula for the order of $L^k(CK(d,4))$ for any value of the degree $d\ge 2$. To
this end, it is easy to see that a quotient digraph of $CK(d,4)$ for $d>2$ is as
shown in Figure~\ref{fig:quocient-donut-color}$(c)$, where now we have to
distinguish four classes of vertices.
Then, the corresponding quotient matrix is
$$
\B=\left(
  \begin{array}{cccc}
    1 & d-1 & 0 & 0 \\
    0 & 0   & 1 & d-2\\
    1 & d-1 & 0 & 0 \\
    0 & 0   & 1 & d-2
  \end{array}
\right),
$$
and it has minimal polynomial is $m(x)=x^3-(d-1)x^2-x$.
In turn, this leads to the recurrence formula $n_k=(d-1)n_{k-1}+n_{k-2}$, with
initial values $n_0=d^4+d$ and $n_1=d^5-d^4+d^3+2d^2-d$, which are
computed by using \eqref{n_l} with the vector
\begin{align*}
  \s & =(|V_1|,|V_2|,|V_3|,|V_4|) \\
  & =((d+1)d, (d+1)d(d-1), (d+1)d(d-1), (d+1)d(d-1)(d-2)).
\end{align*}
Solving the recurrence, we get the closed formula
$$
n_k =\frac{2^k d}{\sqrt{\Delta}}
\left(\frac{
(d^2+d)\sqrt{\Delta}-d^3-d-2
}
{(1-d-\sqrt{\Delta})^{k+1}}
+\frac{(d^2+d)\sqrt{\Delta}+d^3+d+2
}
{(1-d+\sqrt{\Delta})^{k+1}}\right),
$$
where $\Delta=d^2-2d+5$ and, hence, $n_k$ is an increasing sequence.

\subsection{Unicyclic digraphs}
\label{sec:ex-unicyclic}

\begin{figure}[t]
    \begin{center}
        \includegraphics[width=10cm]{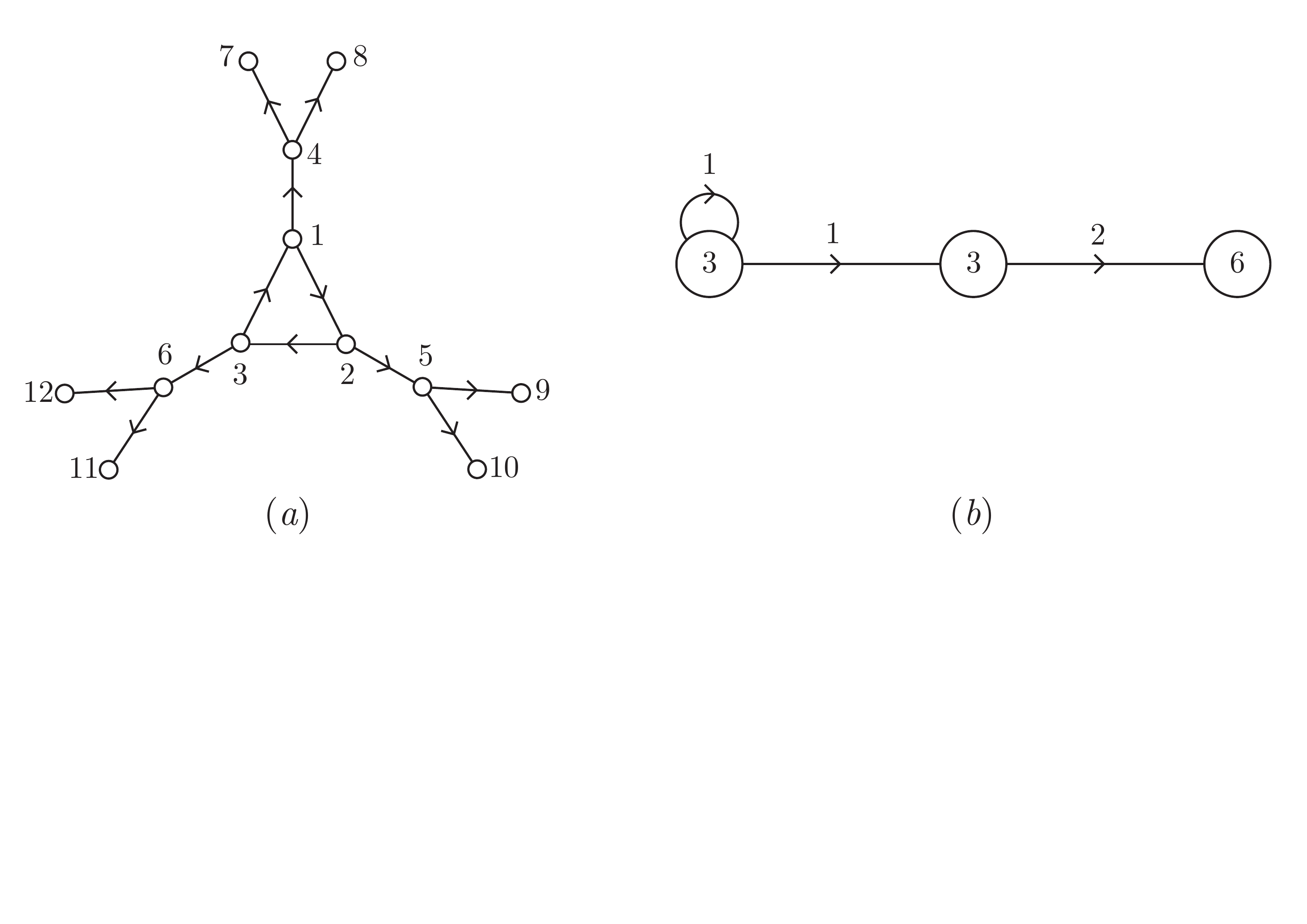}
    \end{center}
    \vskip-3.5cm
	\caption{The unicyclic digraph $G_{3,2}$ and its quotient digraph.}
	\label{fig:unicyclic}
\end{figure}

A unicyclic digraph is a digraph with exactly one (directed) cycle. As usual, we
denote a cycle on $n$ vertices by $C_n$. For example, consider the digraph
$G_{n,d}$, obtained by joining to every vertex of $C_n$ one `out-tree' with $d$
leaves (or `sinks'), as shown in Figure~\ref{fig:unicyclic}$(a)$ for the case
$G_{3,2}$. This digraph has the regular partition $\pi=(V_1,V_2,V_3)$, where $V_1$ is the set of vertices of the cycle, $V_2$ the central vertices of the trees,  and $V_3$ the set of leaves. (In the figure $V_1=\{1,2,3\}$, $V_2=\{4,5,6\}$, and $V_3=\{7,8,9,10,11,12\}$).
This partition gives the quotient digraph $\pi(G)$ of
Figure~\ref{fig:unicyclic}$(b)$, and the
quotient matrix
$$
\B=\left(
  \begin{array}{ccc}
    1 & 1 & 0 \\
    0 & 0 & d \\
    0 & 0 & 0 \\
  \end{array}
\right),
$$
with minimal polynomial $m(x)=x^3-x^2$. Then, by Theorem \ref{maintheo}, the order
of $L^k(G)$ satisfies the recurrence $n_k=n_{k-1}$ for $k\ge 0$, since
$\s(\B^{k}-\B^{k-1})\j^{\top}$ $=0$ for $k=1,2$, where $\s=(n,n,nd)$. Thus, we conclude that all the iterated line digraphs $L^k(G)$ have constant order
$n_k=n_0=n(d+2)$, that is, $n_k$ tends to a positive constant.
(In fact, this is because in this case $L(G)$---and, hence, $L^k(G)$---is
isomorphic to $G$.)

\subsection{Acyclic digraphs}
\label{sec:ex-acyclic}

\begin{figure}[t]
    \begin{center}
        \includegraphics[width=9cm]{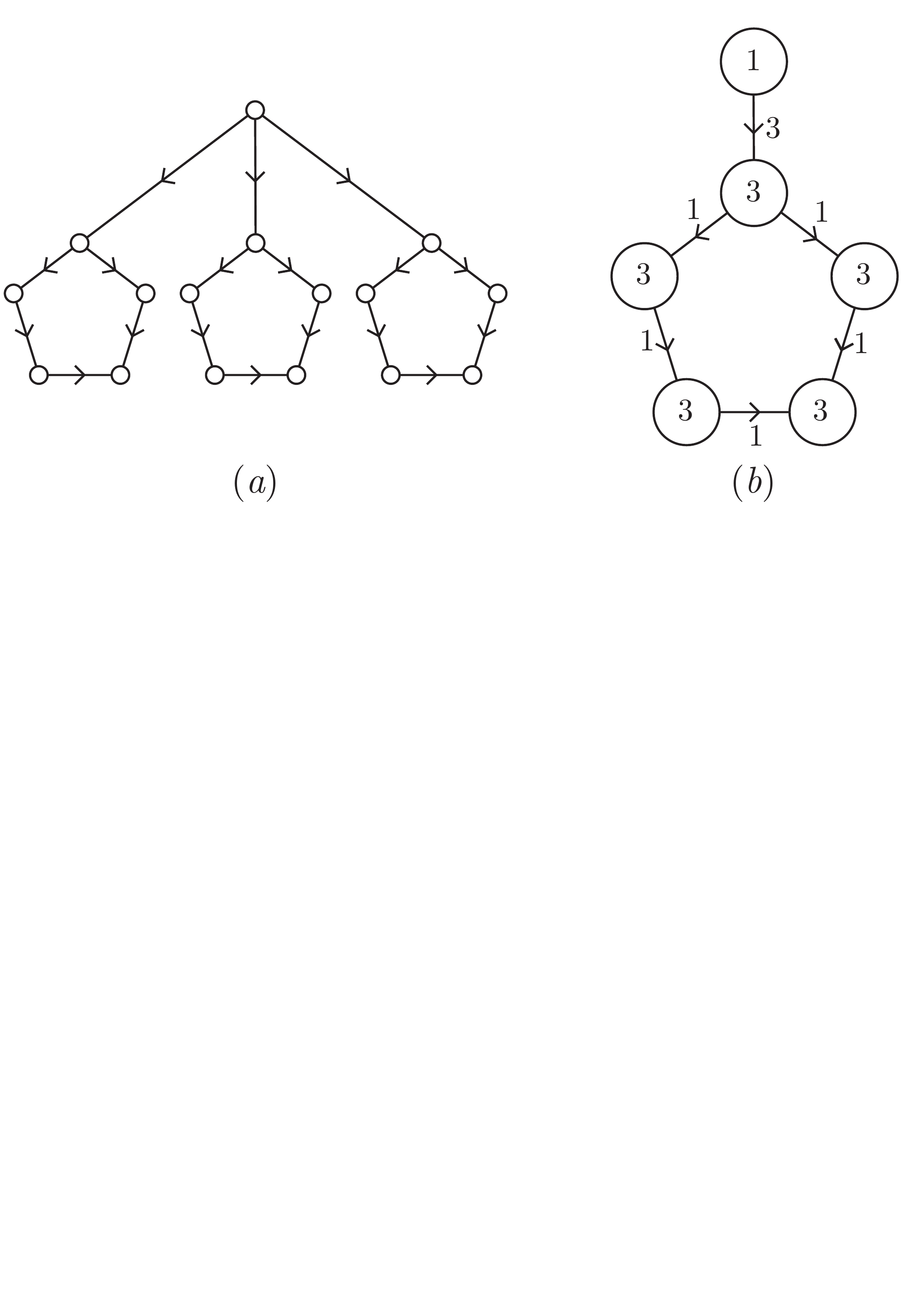}
    \end{center}
      \vskip-8.5cm
  \caption{An acyclic digraph and its quotient digraph.}
	\label{fig:acyclic}
\end{figure}

Finally, let us consider an example of an acyclic digraph, that is, a digraph
without directed cycles, such as the digraph $G$ of Figure~\ref{fig:acyclic}$(a)$.
Its quotient digraph is depicted in Figure~\ref{fig:acyclic}$(b)$, with quotient
matrix
$$
\B=\left(
  \begin{array}{cccccc}
    0 & 3 & 0 & 0 & 0 & 0 \\
    0 & 0 & 1 & 1 & 0 & 0 \\
    0 & 0 & 0 & 0 & 1 & 0 \\
    0 & 0 & 0 & 0 & 0 & 1 \\
    0 & 0 & 0 & 0 & 0 & 1 \\
    0 & 0 & 0 & 0 & 0 & 0 \\
  \end{array}
\right),
$$
and minimal polynomial $m(x)=x^5$. This indicates that $n_k=0$ for every $k\ge 5$
(as expected, because $G$ has not walks of length larger than or equal to $5$). Moreover, from \eqref{n_l}, the first values are
$n_0=16$, $n_1=18$, $n_2=15$, $n_3=9$, and $n_4=3$.

\vskip 1cm
\noindent{\large \bf Acknowledgments.}  This research is supported by the
{\em Ministerio de Econom\'{\i}a y Competitividad} and the {\em European Regional
Development Fund} under project MTM2011-28800-C02-01, and by the Catalan Government
under project 2014SGR1147.


\end{document}